\documentclass[11pt]{article}

\usepackage{amsmath, amssymb, amsthm}
\usepackage{geometry}
\usepackage{graphicx}
\usepackage{titling}

\title{A Complete, Bounded and Almost Convex Metric Space which is Not Totally Convex}
\author{Carlyle Stewart}

\newtheorem{theorem}{Theorem}[section]

\begin{document}

\maketitle

\begin{center}
\begin{minipage}{0.85\textwidth}
\small
\textbf{Abstract.}
In the 2006 paper ``Exponentiation in V-categories'' by M. M. Clementino
and D. Hofmann, exponentiable objects in the category of metric spaces
with non-expansive maps are characterised as those metric spaces which are
bounded and almost convex. A corollary to this characterisation is made
claiming that every complete, exponentiable metric space is totally convex.
A counterexample to this claim is provided in the present paper, and a correct
version of the claim is suggested and proved.
\end{minipage}
\end{center}

\vspace{1em}

\section{Introduction}
In this paper, we construct a counterexample to a corollary in \cite{ref1}.
In Section 2, this counterexample is constructed, and it is proved that it possesses the desired properties. In Section 3, a modified version of the corollary is stated and proved. Recall that a metric space $(X,d)$ is said to be \textit{almost convex} if, given any $x,y \in X$ and positive real numbers $r,s$ such that
$r+s = d(x,y)$, and any $\varepsilon > 0$, one can find a point $z \in X$
such that
\begin{align*}
d(x,z) &\leq s + \varepsilon, \\
d(y,z) &\leq r + \varepsilon.
\end{align*}
A metric space $(X,d)$ is said to be \textit{totally convex} if, given $x,y \in X$ and positive real numbers $r,s$ such that $r+s = d(x,y)$, there exists a point $z \in X$ such that
\begin{align*}
d(x,z) &= s, \\
d(y,z) &= r.
\end{align*}

In \cite{ref1}, exponentiable objects in the category of metric spaces with non-expansive maps are characterised as those metric spaces which are bounded and almost convex.  Corollary C in Section 4.3 then claims that the following conditions are equivalent for a complete metric space $(X,d)$:
\begin{enumerate}
\item $X$ is exponentiable.
\item $X$ is bounded and totally convex.
\end{enumerate}
In particular, this implies that every bounded, complete, almost convex metric space is totally convex. We provide a counterexample to this claim (see Theorem 2.1) and prove that the claim holds under the additional assumption of compactness (see Theorem 3.1).

\vfill

\noindent\rule{\textwidth}{0.4pt}

\small
\noindent Department of Mathematical Sciences, Stellenbosch University
\section{A counterexample}
We define a subspace of $\mathbb{R}^2$, and then remetrise it with the induced path-length metric. To define the subspace, first define the ``hat'' of height $\varepsilon > 0$,
$H(\varepsilon)$, to be the union of the line segments (with endpoints)
connecting $(0,0)$ to $(1,\varepsilon)$ and $(1,\varepsilon)$ to $(2,0)$. The hat space is then
\[
X = \bigcup_{n \in \mathbb{N}} H\left(\frac{1}{n}\right),
\]
with the induced path-length metric.
\begin{figure}[h]
\centering
\includegraphics[width=0.5\textwidth]{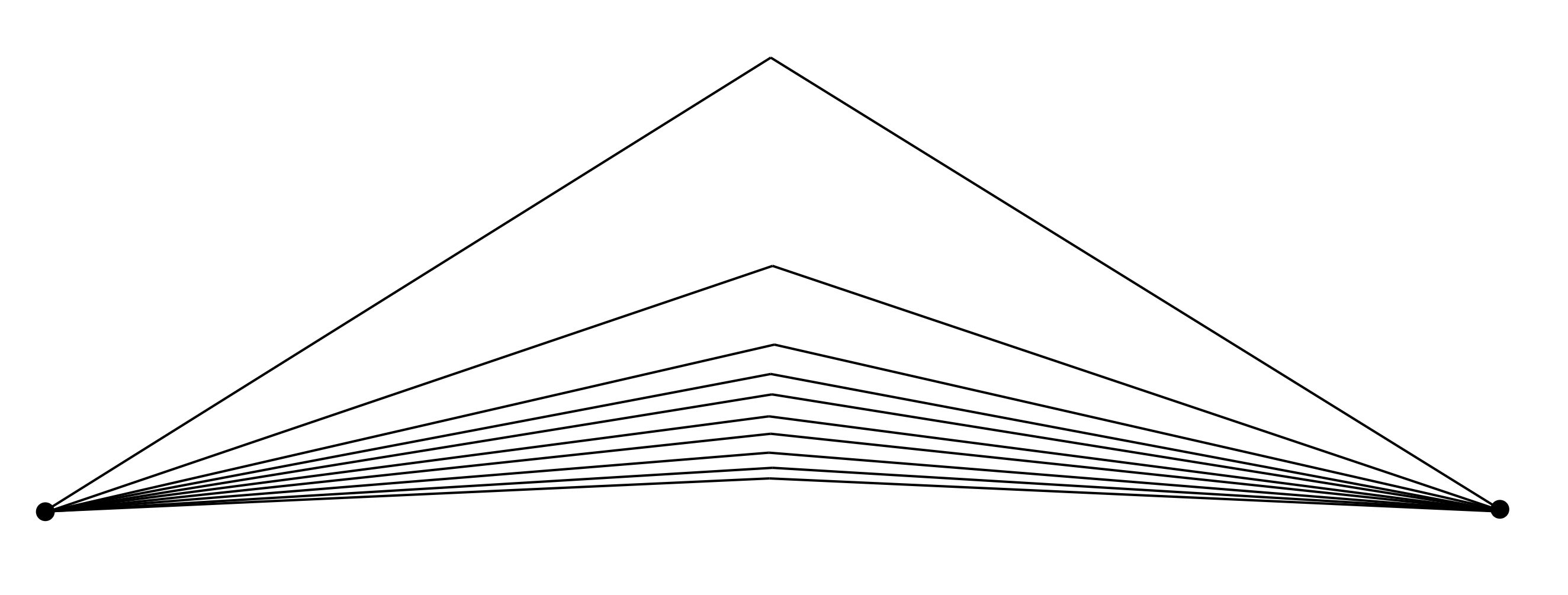}
\caption{An illustration of the hat space}
\end{figure}
\begin{theorem}
The hat space is bounded, complete and almost convex, but not totally convex.
\end{theorem}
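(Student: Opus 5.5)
The plan is to give explicit formulas for the path-length metric $d$ on $X$ and then check the four properties one at a time. Write $o=(0,0)$, $e=(2,0)$ and $H_n=H(1/n)$. Each $H_n$ is a polygonal arc of Euclidean length $L_n=2\sqrt{1+1/n^2}$, so $L_n>2$ and $L_n\to 2$. Distinct hats meet only at $o$ and $e$. For $p\in H_n$ let $a_p$ and $b_p$ be the arclengths along $H_n$ from $p$ to $o$ and to $e$, so $a_p+b_p=L_n$. For the endpoints we set $a_o=0$, $b_e=0$.

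\textbf{Distance formulas.} A path in $X$ can pass from one hat to another only through $o$ or $e$. Using this, I will first verify the following.
\begin{itemize}
\item $d(o,p)=a_p$ and $d(e,p)=b_p$. For example, any path from $o$ to $p$ either stays in $H_n$, or reaches $p$ through $e$ and so has length at least $b_p+2>a_p$, since any route from $o$ to $e$ has length at least $\inf_m L_m=2$.
\item For $p\in H_n$ and $q\in H_m$ with $m\neq n$,
\[
d(p,q)=\min(a_p+a_q,\; b_p+b_q).
\]
\end{itemize}

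\textbf{Boundedness.} Every point lies within $L_1/2$ of $o$ or of $e$, and $d(o,e)=2$. Hence $\operatorname{diam}X\le L_1+2$.

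\textbf{Completeness.} Let $(x_i)$ be a Cauchy sequence.
\begin{itemize}
\item If a tail of $(x_i)$ lies in finitely many hats, that tail lies in a compact set, so $(x_i)$ converges. Here I use that $d$ is equivalent to the Euclidean metric on each finite union of hats.
\item Otherwise, choose $N$ with $d(x_i,x_j)<1$ for $i,j\ge N$. For $x_i,x_j$ on distinct hats, the formula above gives that $a_{x_i}+a_{x_j}<1$ or $b_{x_i}+b_{x_j}<1$. Because $a+b>2$ for every non-endpoint point, each point of the tail (outside $\{o,e\}$) is \emph{$o$-close} ($a<1$) or \emph{$e$-close} ($b<1$), but not both. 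Two points on different hats must both be $o$-close or both be $e$-close. Since the tail visits infinitely many hats, all of it is of one type, say $o$-close.
\item Then $d(x_i,x_j)<\delta$ on distinct hats forces $a_{x_i}<\delta$. Points sharing a hat with $x_i$ can be compared through a third point on a different hat. So $a_{x_i}=d(o,x_i)\to0$ and $x_i\to o$.
\end{itemize}
This bookkeeping is the step I expect to be fiddliest, though it is elementary.

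\textbf{Almost convexity.} This is a general fact about path-length metrics. Given $x,y$, $r+s=d(x,y)$ and $\varepsilon>0$, pick a path $\gamma$ in $X$ from $x$ to $y$ of length at most $d(x,y)+\varepsilon$. Let $z$ be the point of $\gamma$ at arclength $s$ from $x$. Then $d(x,z)\le s$ and $d(z,y)\le r+\varepsilon$. The only point needing checking is that the length of such a path measured in $d$ equals its Euclidean length, i.e.\ that $(X,d)$ is a length space. This holds because $d$ and the Euclidean metric agree locally along each hat away from $o,e$, and $d$ is defined as an infimum of Euclidean lengths.

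\textbf{Failure of total convexity.} Take $x=o$, $y=e$. Every path from $o$ to $e$ traverses some whole hat, and $\inf_n L_n=2$, so $d(o,e)=2$. Choose $r=s=1$. Suppose some $z$ satisfies $d(o,z)=1=d(e,z)$. Clearly $z\ne o,e$, so $z\in H_n$ for some $n$, and by the first distance formula
\[
d(o,z)+d(e,z)=a_z+b_z=L_n>2,
\]
a contradiction. Hence the hat space is not totally convex.
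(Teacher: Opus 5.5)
Your distance lemma is false as stated, and the inequality you use to prove it fails. Since $a_p+b_p=L_n$, the claim $b_p+2>a_p$ is equivalent to $b_p>(L_n-2)/2$. This is violated by every point of $H_n$ within arclength $(L_n-2)/2$ of $e$, which is a nonempty set on every hat because $L_n>2$.

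Concretely, take $p\in H_1$ with $b_p=\tfrac{1}{10}$, so $a_p=2\sqrt{2}-\tfrac{1}{10}\approx 2.73$. Run from $o$ to $e$ along a very flat hat $H_m$ and then back along $H_1$ to $p$. These paths have length $L_m+\tfrac{1}{10}\to 2.1$, so $d(o,p)=2.1<a_p$.

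What your case analysis actually proves is $d(o,p)=\min(a_p,\,2+b_p)$ and $d(e,p)=\min(b_p,\,2+a_p)$. Since every path between distinct hats passes through $o$ or $e$, the correct two-hat formula is
$d(p,q)=\min\bigl(d(p,o)+d(o,q),\,d(p,e)+d(e,q)\bigr)=\min(a_p+a_q,\;b_p+b_q,\;a_p+2+b_q,\;b_p+2+a_q)$.
The extra terms genuinely matter: for $p\in H_1$ with $a_p=\tfrac{1}{10}$ and $q\in H_2$ with $b_q=\tfrac{1}{10}$, one gets $d(p,q)=2.2$, strictly below both of your terms.

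The error is local, because each extra term is at least $2$. Your formulas therefore hold whenever the distance being computed is below $2$, and that is the only regime in which you use them as equalities:
\begin{itemize}
\item In the completeness argument you apply the two-hat formula only to pairs at distance $<1$. For $x_i\to o$ you need only $d(o,x_i)\le a_{x_i}$.
\item In the total-convexity argument, $d(o,z)=1<2$ forces $a_z=1$, and likewise $b_z=1$. Then $L_n=2$, a contradiction.
\item Boundedness uses only $d(o,p)\le a_p$ and $d(e,p)\le b_p$.
\end{itemize}
So replace the lemma by the corrected formulas, or restrict it to distances below $2$, and the proof goes through.

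For comparison, the paper never computes exact distances. It uses the cut-point observation only as a lower bound: a Cauchy sequence converging to neither $o$ nor $e$ has a subsequence at distance $>\varepsilon$ from both, and two such points on different hats are then more than $2\varepsilon$ apart. That is exactly what sidesteps the pitfall above. Your $o$-close/$e$-close bookkeeping instead identifies the limit explicitly. You also supply arguments for boundedness, almost convexity and the failure of total convexity, which the paper only asserts.

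In the almost-convexity step, the length-space check is unnecessary. By the very definition of $d$, $d(x,z)$ is at most the Euclidean length of the subpath from $x$ to $z$.
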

\begin{proof}
Note that the distance between the points $(0, 0)$ and $(2, 0)$ in the hat space is still 2, since the hat $H(\varepsilon)$ contains a polygonal path connecting these two points, of length $2 + 2\varepsilon^2$. The hat space is not totally convex, since there is no point $x \in X$
satisfying
\begin{align*}
d((0,0),x) &= 1, \\
d((2,0),x) &= 1.
\end{align*}
The hat space is however complete, bounded and almost convex. It is easy to see that the hat space is bounded and almost convex. To show that it is complete, let $(x_n)_{n \in \mathbb{N}}$ be a Cauchy sequence in $(X,d)$. Recall that
\[
X = \bigcup_{n \in \mathbb{N}} H\left(\frac{1}{n}\right),
\]
and then note that if $(x_n)$ is eventually contained in some $H(1/m)$, then it converges since each hat is complete (as a subspace). Now suppose that $(x_n)$ does not converge to $(0,0)$ or $(2,0)$. Then there is some $\varepsilon > 0$ and a subsequence $(x_{n_k})$ such that
\[
d\big((0,0), x_{n_k}\big) > \varepsilon \quad \text{and} \quad d\big((2,0), x_{n_k}\big) > \varepsilon
\]
for all $k \in \mathbb{N}$. Now, if $(x_{n_k})$ is eventually confined to a single hat, then it is convergent, and hence $(x_n)$ is convergent. Thus, suppose $(x_{n_k})$ is not eventually confined to a single hat. Then for any $m \in \mathbb{N}$, we can find $l > m$ such that $x_{n_m}$ and $x_{n_l}$ lie on different hats. But then
\[
d(x_{n_m}, x_{n_l}) > 2\varepsilon,
\]
since the metric on $X$ is the path-length metric. This contradicts the assumption that $(x_n)$ is Cauchy. Hence $(x_n)$ converges, and therefore $X$ is complete.
\end{proof}

\section{A correction}

\begin{theorem}
A compact, almost convex metric space $(X,d)$ is totally convex.
\end{theorem}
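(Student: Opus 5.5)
Fix $x,y \in X$ and positive reals $r,s$ with $r+s = d(x,y)$. The plan is to build the required point $z$ as a limit of the approximate midpoints given by almost convexity, using sequential compactness of $X$ to extract a convergent subsequence and continuity of the metric to pass the inequalities to the limit. Compactness is exactly what replaces the missing limit point in the hat space of Theorem 2.1, where the approximate points escape along ever-flatter hats.

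First, for each $n \in \mathbb{N}$, apply almost convexity with $\varepsilon = 1/n$ to obtain a point $z_n \in X$ with
\begin{align*}
d(x,z_n) &\leq s + \tfrac{1}{n}, \\
d(y,z_n) &\leq r + \tfrac{1}{n}.
\end{align*}
Since $X$ is a compact metric space, it is sequentially compact. Hence some subsequence $(z_{n_k})$ converges to a point $z \in X$.

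Next, the functions $d(x,\cdot)$ and $d(y,\cdot)$ are continuous (indeed $1$-Lipschitz), so letting $k \to \infty$ gives $d(x,z) \leq s$ and $d(y,z) \leq r$. To upgrade these to equalities, use the triangle inequality:
\[
r + s = d(x,y) \leq d(x,z) + d(z,y) \leq s + r .
\]
All the inequalities in this chain must therefore be equalities. In particular $d(x,z)+d(y,z) = s + r$. Together with $d(x,z)\le s$ and $d(y,z)\le r$, this forces $d(x,z) = s$ and $d(y,z) = r$, so $X$ is totally convex.

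No step is a genuine obstacle. The only point needing care is the passage from the approximate inequalities to exact equalities, and the triangle-inequality squeeze above settles it once both limiting inequalities are in hand. I would also note in the write-up that compactness is used only through sequential compactness. Boundedness and completeness alone do not suffice, as Theorem 2.1 shows, because there the sequence $(z_n)$ has no convergent subsequence.
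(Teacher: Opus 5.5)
Your proof is correct and follows the paper's argument: you take approximate points from almost convexity with $\varepsilon = 1/n$, extract a convergent subsequence by compactness, and use continuity of the metric to get $d(x,z)\le s$ and $d(y,z)\le r$. Your explicit triangle-inequality squeeze $r+s = d(x,y)\le d(x,z)+d(z,y)\le s+r$ supplies the step that the paper compresses into the single word ``Hence''.
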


\begin{proof}
Let $x,y \in X$ and $r,s \geq 0$ such that $r+s = d(x,y)$. Since $X$ is almost convex, we can construct a sequence $(z_n)$ in $X$ such that
\begin{align*}
d(x,z_n) &\leq r + \frac{1}{n}, \\
d(y,z_n) &\leq s + \frac{1}{n}.
\end{align*}
By compactness, $(z_n)$ has a convergent subsequence $(z_{n_m})$.
Let
\[
z = \lim_{m \to \infty} z_{n_m}.
\]
By continuity of the metric,
\begin{align*}
d(x,z) &= \lim_{m \to \infty} d(x,z_{n_m}) \leq r, \\
d(y,z) &= \lim_{m \to \infty} d(y,z_{n_m}) \leq s.
\end{align*}
Hence,
\[
d(x,z) = r, \quad d(y,z) = s,
\]
and $X$ is totally convex.
\end{proof}

\section*{Acknowledgments}

The author extends their gratitude to Graham Manuel for suggesting the
corrected version of the relevant claim, and to Zurab Janelidze for assisting
in the editing of the present text.

\end{document}